\documentclass[1p]{elsarticle}

\usepackage{lineno,hyperref}


\usepackage{amssymb}
\usepackage{eucal}
\newcommand{\R}{{\Bbb R}}
\newcommand{\Z}{{\Bbb Z}}

\newcommand{\C}{{\Bbb C}}

\usepackage{amsmath}

\newtheorem{thm}{Theorem}
\newtheorem{lemma}[thm]{Lemma}
\newtheorem{corollary}[thm]{Corollary}
\newtheorem{proposition}[thm]{Proposition}

\newtheorem{remark}[thm]{Remark}

\newproof{proof}{Proof}








\bibliographystyle{elsarticle-num}

\begin{document}

\begin{frontmatter}

\title{An iterative estimation for disturbances of semi-wavefronts to the delayed Fisher-KPP equation}

\author[a]{Rafael Benguria D.}
\address[a]{Instituto de F\'isica, Pontificia Universidad Cat\'olica de Chile, Casilla 306, Santiago, Chile
\\ {\rm E-mail: rbenguri@fis.uc.cl}}
\author[b]{Abraham Solar}
\address[b]{Instituto de F\'isica, Pontificia Universidad Cat\'olica de Chile, Casilla 306, Santiago, Chile
\\ {\rm E-mail: asolar@fis.uc.cl}}

\begin{abstract} We give an iterative method to estimate the disturbance of semi-wavefronts of  the equation:  $\dot{u}(t,x) = u''(t,x) +u(t,x)(1-u(t-h,x)),$ $x \in \mathbb{R},\ t >0;$ where $h>0.$ As a consequence, we show the exponential stability, with an unbounded weight, of semi-wavefronts with speed $c>2\sqrt{2}$  and $h>0$. Under the same restriction of  $c$ and $h$, the uniqueness of semi-wavefronts is obtained.      
\end{abstract}

\begin{keyword}semi-wavefront\sep
stability \sep  uniqueness, delay, reaction-diffusion equations 
\MSC[2010] 34K12\sep 35K57\sep 92D25
\end{keyword}

\end{frontmatter}


\section{Introduction}

In this work, we present some results that answer  key questions about bounded solutions of the form $u(t,x):=\phi_c(x+ct)$ to the diffusive Hutchison equation, also called delayed Fisher-KPP equation, with delay $h\geq0$,   
\begin{eqnarray}\label{ie}
\dot{u}(t,x) &=& u''(t,x) + u(t,x)(1-u(t-h,x)),\quad t >0,\ x \in \R,
\end{eqnarray}
(here $\ \dot{} \ $ indicates the temporal derivate whereas $\ ' \ $ indicates the spatial derivate) satisfying  $\phi_c(-\infty)=0$ and $\liminf_{z\to+\infty}\phi_c(z)>0$ for some speed $c>0$ and profile $\phi_c:\R\to\R_+$. Such solutions are called {\it semi-wavefronts} which clearly satisfy the ordinary differential equation with delay

\begin{eqnarray}\label{swe}\nonumber
\phi''_c(z)-c\phi'_c(z)+\phi_c(z)(1-\phi_c(z-ch))=0,\quad z\in\R.
\end{eqnarray}
 If $\phi_c(+\infty)=1$  semi-wavefronts are called {\it wavefronts}.  

If $h=0$, the questions on existence, uniqueness, geometry and stability of wavefronts have been satisfactory responded (see, e.g. \cite{STG} and \cite{UC} and references therein). In this case the general conclusions are: (i) semi-wavefronts are indeed monotone  wavefronts existing for all $c\geq 2$, (ii) two wavefronts with same speed are unique up to translations and (iii) wavefronts are stable under suitable perturbations.

 However, for $h>0$ it has been only recently  established the existence of semi-wavefronts on the domain $\{(h,c)\in\R^2:  h\geq 0\ \hbox{and} \ c\geq 2\}$ (see \cite{HT} and \cite{NP}). The study of the existence of wavefronts to (\ref{ie}) was initiated by Wu and Zou \cite{WZ} who constructed a monotone integral operator for small $h$, depending on $c$, whose iterative application to adequate sub and super-solutions converges to a monotone wavefront. Next, Hasik and Trofimchuk \cite{HT} demonstrated that such monotone wavefronts are unique up to translations. Moreover, the existence and uniqueness of monotone wavefronts has been completely established \cite{AGT,HT}.         
In \cite{HT} and \cite{NP} the authors gave significative information about the geometry of semi-wavefronts, they demonstrated the existence of non-monotone wavefronts in the domain $\{(h,c)\in\R^2: 0\leq h\leq 1\ \hbox{and} \ c\geq 2\}$ and the existence of asymptotically periodic semi-wavefronts whenever $h\geq \pi/2$, in a neighborhood of $\pi/2$, and $c(h)$ sufficiently large (see \cite[Theorem 1.7]{HT} and \cite[Theorem 2.3]{NP}). One of the open problems proposed in \cite{NP} is to prove that the semi-wavefronts are unique, up to translation, in $\{(h,c)\in\R^2:  h\geq 0\ \hbox{and} \ c\geq 2\}.$ In this paper we  obtain the uniqueness of semi-wavefronts in the a region of parameters $(h,c)$ that includes the region $\{(h,c)\in\R^2: h\geq 0\ \hbox{and}\ c>2\sqrt{2}\}$, and we provide a result about the stability of these semi-wavefronts; as much as we know, there are no results concerning the stability of semi-wavefronts of (\ref{ie}) when $h>0$. In our main result, the description of domain the parameters $(h,c)$ to the stability depends on a uniform estimation of semi-wavefronts. Thus, the estimation $\phi_c(z)\leq e^{ch}$, for all $z\in\R$, obtained in \cite{HT},  turns out very profitable.

 The study of the asymptotic behavior of solutions to reaction-diffusion equations with delay normally require some kind of maximum principle. For example, the Mackey-Glass type equations
 \begin{eqnarray}\label{MG}
 \dot{u}(t,x)=u''(t,x)-u(t,x)+g(u(t-h,x)),\quad t>0, x\in\R,
 \end{eqnarray}   
 has been widely studied under the KPP condition $g(u)\leq g'(0)u$, $u>0$, for which, initially, the monotonicity of $g$ was assumed (see, e.g.,  \cite{MOZ}). Then, it was shown in \cite{STR1} that the conclusion of stability of semi-wavefronts  to (\ref{MG}) can be obtained only assuming the Lipschitz continuity of $g$ (type monostable) by a suitable application of the maximum principle. However, this technique does not work with (\ref{ie}) since this equation is strongly non monotone'.       
 
 Our technique is closed to the contractive approach used by Travis and Webb \cite{TW} which  establishes the stability and uniqueness of stationary solutions to nonlinear partial functional differential equation $\dot{u}(t)=Au(t)+F(u_t), t>0$, where $u_t(\theta):=u(t+\theta)$ with $\theta\in[-h,0]$ and $u:[-h,+\infty)\to X$ for some Banach space $X$, $F:C([-h,0],X)\to X$ and $A$ is a certain linear operator. The main assumption in \cite{TW} is that  $A$ generates a strongly continuous semigroup with a growth bound greater than the global Lipschitz constant of $F.$ Unfortunately,  the Travis-Webb approach does not apply to (\ref{ie}) because the nonlinearity is not Lipschitz in the usual spaces. However, we show that this technique can be adapted using a variation of parameter formula given in terms of a certain fundamental solution to the linearization of (\ref{ie}) around of a semi-wavefront. Here we follow the theory of fundamental solutions constructed by Friedman \cite{AF}. 
 
 So, in order to pose the Cauchy problem we take an initial datum in the space  $\mathcal{C}:=C([-h,0],C^{1,1}(\R))$ where, as usual,  
$$
C^{1,1}(\R)=\{f:\R\to\R \ \hbox{is bounded and Lipschitz continuous} \}
$$
  
 \noindent with norm 
  $$|f|_{C^{1,1}}:=\sup_{x\in\R}|f(x)|+\sup_{x\neq y}\frac{|f(x)-f(y)|}{|x-y|}$$ 
and the norm in $\mathcal{C}$ is the sup-norm, i.e.,  $|f|_{\mathcal{C}}:=\sup_{s\in[-h,0]}|f(s)|_{C^{1,1}}$. With all this notation, we give the following global existence result to solutions of (\ref{ie}):   





\begin{proposition}\label{prop1}
If $u_0\in\mathcal{C}$ then there exists a unique solution $u(t,\cdot)$ to (\ref{ie}) such that $u(t,x)=u_0(t,x)$ for all $(t,x)\in[-h,0]\times\R$ and $u_t\in\mathcal{C}$ for all $t\geq 0$. Also, $\lim_{t\to 0}u(t,x)=u_0(0,x)$ for all $x\in\R$ and  if $u_0\geq 0$ then
$u(t,x)\geq 0$ for all $(t,z)\in[-h,+\infty)\times\R$. 
\end{proposition}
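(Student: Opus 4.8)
The plan is to solve the delayed problem by the \emph{method of steps}, turning it into a finite cascade of linear parabolic Cauchy problems, each handled by a fundamental-solution representation together with a Banach fixed point. On the first interval $t\in[0,h]$ the delayed term $u(t-h,x)=u_0(t-h,x)$ is already prescribed by the datum, so (\ref{ie}) reads $\dot u=u''+a(t,x)u$ with the \emph{known} coefficient $a(t,x):=1-u_0(t-h,x)$, which is bounded on $[0,h]\times\R$ and Lipschitz in $x$. More generally, assuming $u$ has been constructed on $[-h,kh]$ with $u(s,\cdot)\in C^{1,1}(\R)$, on $[kh,(k+1)h]$ equation (\ref{ie}) is again linear in the unknown, $\dot u=u''+a_k(t,x)u$ with $a_k(t,x):=1-u(t-h,x)$ bounded and Lipschitz in $x$ by the inductive hypothesis. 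Thus it suffices to produce, for each step, a unique bounded solution of a linear parabolic equation with bounded coefficient and $C^{1,1}$ initial value, and to check the solution is again of class $C^{1,1}$ in $x$ at every time, so that the induction closes.

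First I would rewrite each step as the mild (Duhamel) equation
\[
u(t,\cdot)=G_{t-kh}\ast\psi_k+\int_{kh}^{t}G_{t-s}\ast\bigl(a_k(s,\cdot)\,u(s,\cdot)\bigr)\,ds,
\]
where $G_\tau(x)=(4\pi\tau)^{-1/2}e^{-x^2/4\tau}$ is the heat kernel and $\psi_k:=u(kh,\cdot)$; this is the representation that Friedman's fundamental solution for $\dot u=u''+a_ku$ yields. The key estimates are that convolution with the probability density $G_\tau$ increases neither the sup-norm nor the Lipschitz seminorm, so $\|G_\tau\ast f\|_{C^{1,1}}\le\|f\|_{C^{1,1}}$, and that multiplication by the bounded Lipschitz function $a_k$ is bounded on $C^{1,1}(\R)$. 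Hence the right-hand side defines a map on $C([kh,kh+\delta],C^{1,1}(\R))$ whose Lipschitz constant is $O(\delta)$, a contraction for $\delta$ small, and its unique fixed point is the solution on a short interval. Because the equation is \emph{linear} in $u$, Gronwall's inequality in the $C^{1,1}$ norm gives an a priori bound $\|u(t,\cdot)\|_{C^{1,1}}\le C\,e^{C(t-kh)}\|\psi_k\|_{C^{1,1}}$ with $C$ depending only on $\|a_k\|_{C^{1,1}}$, which rules out blow-up and lets me continue the local solution up to $t=(k+1)h$. Uniqueness on each step comes from the same contraction; patching the steps yields a unique global solution with $u(s,\cdot)\in C^{1,1}(\R)$ for all $s\ge-h$, and continuity of $s\mapsto u(s,\cdot)$ into $C^{1,1}(\R)$ for positive times, hence $u_t\in\mathcal C$, follows from the Duhamel formula and parabolic smoothing.

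The limit $\lim_{t\to0}u(t,x)=u_0(0,x)$ is read off the Duhamel formula on the first step: as $t\to0^+$ the term $G_t\ast\psi_0$ converges to $\psi_0=u_0(0,\cdot)$ (approximate identity) and the integral term is $O(t)$, so convergence holds pointwise, indeed locally uniformly. For nonnegativity I would invoke the maximum principle on each step: writing $w:=e^{-Mt}u$ with $M\ge\sup a_k$ turns $\dot u=u''+a_ku$ into $\dot w=w''+(a_k-M)w$ with nonpositive zeroth-order coefficient, for which a \emph{bounded} solution with nonnegative initial datum stays nonnegative; equivalently, Friedman's fundamental solution of a parabolic operator with bounded coefficients is positive, so the Duhamel formula sends $\psi_k\ge0$ to $u\ge0$. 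Since $\psi_0=u_0(0,\cdot)\ge0$, nonnegativity propagates through every step and gives $u\ge0$ on $[-h,+\infty)\times\R$.

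The step I expect to be the main obstacle is the uniform control of the \emph{Lipschitz} part of the $C^{1,1}$ norm together with the correct functional setting at the initial and junction times. Convolution with $G_\tau$ only preserves the Lipschitz seminorm — it does not make it decay — and the coefficients $a_k$ carry their own inductively generated Lipschitz constants, so one must verify these do not accumulate uncontrollably across steps; this is exactly why the Gronwall bound has to be run in the full $C^{1,1}$ norm rather than merely in the sup-norm. A second delicate point is that the heat semigroup is not strongly continuous at $t=0$ in the Lipschitz seminorm when the datum's derivative is only bounded, so the matching of the PDE solution to $u_0(0,\cdot)$ at $t=0$ should be understood in the pointwise/sup-norm sense stated, and the membership $u_t\in\mathcal C$ must be argued through the smoothing of $G_\tau$ for $\tau>0$ rather than by naive continuity at the junction. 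Finally, since the domain is all of $\R$, the maximum-principle step relies on the boundedness of $u$ (a Tychonoff-type uniqueness class), which the a priori bound above supplies.
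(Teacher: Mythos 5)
Your proposal is correct and follows essentially the same route as the paper's proof: the method of steps reducing (\ref{ie}) on each interval $[kh,(k+1)h]$ to a linear parabolic problem with known coefficient, the heat-kernel Duhamel equation (the paper's (\ref{inte}), whose unique bounded solvability the paper gets from \cite[Lemma 4.1]{UC} where you run a contraction plus Gronwall), propagation of the Lipschitz bound as in (\ref{derivate}), and the Phragm\`en--Lindel\"of/maximum principle in the bounded (Tychonoff) class for nonnegativity. Your approximate-identity argument for $\lim_{t\to 0}u(t,x)=u_0(0,x)$ is a self-contained substitute for the paper's appeal to Friedman's fundamental solution \cite{AF}, and your caveat about the failure of strong continuity of the heat semigroup in the Lipschitz seminorm at the junction is a fair reading of a point the paper leaves implicit.
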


\begin{proof}
We denote  the fundamental solution to the heat equation by $\Gamma_0(t,x):=e^{-x^2/4t}/2\sqrt{\pi t}$. Next, from \cite[Lemma 4.1]{UC} the integral equation 
\begin{eqnarray} \label{inte}
u(t,x)=\Gamma_0(t,\cdot)\ast u_0(0,\cdot)(x)+\int_0^t\Gamma_0(t-s,\cdot)\ast[1-u(t-h,\cdot)]u(t,\cdot)(x)ds,
\end{eqnarray}
has a unique bounded solution on $[0,h]\times\R$ which is a solution to (\ref{ie}) since $u(t-h,\cdot)$ is Lipschitz continuous, uniformly with respect to $t\in[0,h]$. Then, differentiating (\ref{inte}) we conclude that
\begin{eqnarray}\label{derivate}
|u_x(t,x)|\leq \operatorname{ess}\sup_{y\in\R}|u_x(0,y)|+\frac{\sqrt{t}}{2\sqrt{\pi}}\sup_{(s,y)\in[0,h]\times\R}|u(s,y)[1-u(s-h,y)]|\ \hbox{on} \ [0,h]\times\R
\end{eqnarray}


\noindent so $u(t,\cdot)$ is  Lipschitz continuous, uniformly respect to $t\in[0,h]$. Repeating the process to the intervals $[h,2h],[2h,3h]...$ the first assertion is true. Next, since the parabolic operator $Lu=u''+(1-u_0)u-\dot{u}$ has a fundamental solution on $[0,h]\times\R$( see, e.g., \cite[Chapter 1, Theorem 10]{AF}) we have that $u(t,x)$ tends to $u_0(0,x)$ when $t$ tends to 0, for all $x\in\R.$ Finally, by the Phragm\`en-Lindel\"of principle \cite[Chapter 3, Theorem 10]{PW} applied to the operator $L$ we obtain $u(t,x)\geq 0$, for all $(t,x)\in[0,h]\times\R$, whenever $u_0\geq 0$ and in a similar form we have $u(t,\cdot)\geq 0$ for the intervals $[h,2h],[2h,3h]...$ 
\end{proof}
\begin{remark}
We note that  if the initial datum is such that $u_0(s,x)=O(x^2)$ the comparison principle works too, however, the boundedness of derivate in (\ref{derivate}) requires the boundedness of $u_0(s, \cdot)$ uniformly in $s\in[0,h]$ and without the uniform boundedness  in time the uniqueness of solution fails  to the step $[0,h]$ (see, e.g.,  \cite{ChKm}). Also, we note that the typical results  to global existence (see, e.g., \cite[Proposition 2.1]{TW} and \cite[Theorem 2.3]{Wu}) do not apply to (\ref{ie}).
\end{remark}

An interesting open problem is to determine whether the solutions of (\ref{ie}), with initial data in $\mathcal{C}$ for example, are globally bounded in time. As much as we know, this issue has been only dealt to the boundary-initial Cauchy problem on a bounded domain (see, e.g., \cite{Fri}).

\section{Main Result}
Now, we denote by $0<\lambda_1(c)\leq \lambda_2(c)$ the zeros of quadratic function 
$$
\epsilon_{\lambda}:=\lambda^2-c\lambda+1.
$$ 
 
Next, for $\lambda\in\R$ we define 
$$
|f|_{\lambda}=\sup_{z\in\R}e^{-\lambda z}|f(z)|=||e^{-\lambda \cdot}f(\cdot)||_{\infty},
$$

and
$$
L_{\lambda}^{\infty}=\{f:\R\to\R, |f|_{\lambda}<\infty \}.
$$


\noindent Also, denoting  by $z=x+ct$ and $v(t,z):=u(t,z-ct)$ we have

\begin{eqnarray}\label{eqc}
\dot{v}(t,z)=v''(t,z)-cv'(t,z)+v(t,z)(1-v(t-h,z-ch)), \quad (t,z)\in\R_+\times\R
\end{eqnarray}

\noindent so, a semi-wavefront $\phi_c$ is a stationary solution of (\ref{eqc}) and we can establish the following result on exponential stability of semi-wavefronts.   

\begin{thm}[Stability with unbounded weigth]\label{sthm}
Let $\lambda_c\in(\lambda_1(c),\lambda_2(c))$. Suppose that $\phi_c$ is semi-wavefront  to (\ref{ie}) and $v_0\in\mathcal{C}$ an initial datum to (\ref{eqc}). Then the following assertions are true 

\begin{itemize} 
\item[(i)] If $v_0$ is non negative such that $v_0(0,\cdot)\in L^{\infty}_{\lambda_c}$ then

\begin{eqnarray}\label{zeroc}\nonumber
\lim_{t\to\infty}v(t,z)=0\quad \hbox{for all} \quad z\in\R.
\end{eqnarray}

\item[(ii)] If 
\begin{eqnarray}\label{norm}\nonumber
e^{-\lambda_c ch}|\phi_c|_{0}\leq \epsilon_{\lambda_c}
\end{eqnarray}
and $\delta\leq 0$ is number satisfying 
\begin {eqnarray}\label{inq1}
e^{-\lambda_c ch}|\phi_c|_{0}\leq e^{\delta h}(\delta+\epsilon_{\lambda_c})
\end{eqnarray}
then 
\begin{eqnarray}\label{inp}
\phi_c(\cdot)-v_0\in C([-h,0],L^{\infty}_{\lambda_c})
\end{eqnarray}
implies
\begin{eqnarray}\label{expst}
|v(t,\cdot)-\phi_c|_{\lambda_c}\leq K_{\lambda_c} e^{\delta t}\quad \hbox{for all}\quad t\geq -h,
\end{eqnarray}
where $K_{\lambda_c}=\max_{s\in[-h,0]}|\phi_c-v_0(s,\cdot)|_{\lambda_c}.$

Finally, if we take $\lambda_c=c/2$ in (\ref{inp}) we obtain (\ref{expst}) on the domain $\{(h,c)\in\R^2: h\geq 0\ \hbox{and}\ c>2\sqrt{2}\}.$ 
\end{itemize}
\end{thm}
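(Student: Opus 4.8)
The whole argument rests on one kernel computation. Writing $v^{h}:=v(t-h,z-ch)$ and $w:=v-\phi_{c}$, subtraction of the stationary profile equation from (\ref{eqc}) gives, after grouping the reaction terms,
\[
\dot w=w''-cw'+(1-v^{h})\,w-\phi_{c}(z)\,w(t-h,z-ch).
\]
The decisive modelling choice is to leave the \emph{entire} factor $1-v^{h}$ inside the zeroth-order coefficient instead of linearizing it about $\phi_{c}$: this removes the genuinely quadratic term $w\,w^{h}$, which is uncontrollable in $L^{\infty}_{\lambda_{c}}$ since $|w\,w^{h}|$ grows like $e^{2\lambda_{c}z}$ at $+\infty$. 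Let $G(t,z):=e^{t}\Gamma_{0}(t,z-ct)$ be the fundamental solution of $\partial_{t}-\partial_{z}^{2}+c\partial_{z}-1$. A Gaussian-moment computation gives the engine of the proof,
\[
\int_{\R}G(t,z-\xi)\,e^{\lambda_{c}\xi}\,d\xi=e^{\epsilon_{\lambda_{c}}t}\,e^{\lambda_{c}z},
\]
so that convolution with $G$ contracts the weight $e^{\lambda_{c}z}$ at the rate $\epsilon_{\lambda_{c}}<0$; equivalently $|G(t,\cdot)\ast g|_{\lambda_{c}}\le e^{\epsilon_{\lambda_{c}}t}|g|_{\lambda_{c}}$.

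For (i) I would argue by comparison. Since $v\ge0$ by Proposition~\ref{prop1} and $v^{h}\ge0$, one has $\dot v\le v''-cv'+v$, so $v$ is a subsolution of $\partial_{t}-\partial_{z}^{2}+c\partial_{z}-1$. With $C:=|v_{0}(0,\cdot)|_{\lambda_{c}}$ the function $Ce^{\epsilon_{\lambda_{c}}t}e^{\lambda_{c}z}$ solves this operator exactly and dominates $v$ at $t=0$; the Phragm\'en--Lindel\"of principle (used already in the proof of Proposition~\ref{prop1}, see \cite{PW}) yields $0\le v(t,z)\le Ce^{\epsilon_{\lambda_{c}}t}e^{\lambda_{c}z}$, and letting $t\to\infty$ with $z$ fixed gives $\lim_{t\to\infty}v(t,z)=0$.

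For (ii) I would treat $-\phi_{c}w^{h}$ as a source for the parabolic operator with coefficient $1-v^{h}$, whose fundamental solution $\mathcal Z(t,z;s,\xi)$ exists by Friedman's theory \cite{AF}. Because $1-v^{h}\le1$ (here the positivity of $v$ from Proposition~\ref{prop1} is used), comparison gives $0\le\mathcal Z(t,z;s,\xi)\le G(t-s,z-\xi)$, so the weighted contraction above transfers verbatim to $\mathcal Z$. The variation-of-parameters formula reads
\[
w(t,z)=\int_{\R}\mathcal Z(t,z;0,\xi)w(0,\xi)\,d\xi-\int_{0}^{t}\!\!\int_{\R}\mathcal Z(t,z;s,\xi)\,\phi_{c}(\xi)\,w(s-h,\xi-ch)\,d\xi\,ds,
\]
and with $N(t):=|w(t,\cdot)|_{\lambda_{c}}$ the homogeneous term is $\le e^{\epsilon_{\lambda_{c}}t}N(0)$, while the elementary bound $e^{-\lambda_{c}\xi}\phi_{c}(\xi)|w(s-h,\xi-ch)|\le|\phi_{c}|_{0}e^{-\lambda_{c}ch}N(s-h)$ controls the source. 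This yields the closed inequality
\[
N(t)\le e^{\epsilon_{\lambda_{c}}t}N(0)+|\phi_{c}|_{0}\,e^{-\lambda_{c}ch}\int_{0}^{t}e^{\epsilon_{\lambda_{c}}(t-s)}N(s-h)\,ds .
\]
Assuming (\ref{inp}), inserting the ansatz $N(t)\le K_{\lambda_{c}}e^{\delta t}$ and integrating, a delayed Halanay-type continuation on the intervals $[kh,(k+1)h]$ closes precisely when the non-positive rate $\delta$ balances the delayed feedback, i.e. under (\ref{inq1}); tracking the constants then gives (\ref{expst}) with $K_{\lambda_{c}}=\max_{s\in[-h,0]}|\phi_{c}-v_{0}(s,\cdot)|_{\lambda_{c}}$.

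Finally, for the specialization I would put $\lambda_{c}=c/2$, the midpoint of $(\lambda_{1}(c),\lambda_{2}(c))$, where $\epsilon_{c/2}=1-c^{2}/4$ is most negative, and invoke the a priori bound $|\phi_{c}|_{0}\le e^{ch}$ of \cite{HT}. Then $e^{-\lambda_{c}ch}|\phi_{c}|_{0}\le e^{ch(1-c/2)}\le1$, while the admissible feedback in the balance condition is measured by $|\epsilon_{c/2}|=c^{2}/4-1\ge1$ exactly when $c\ge2\sqrt2$; the two sides meet at $c=2\sqrt2$ and, for $c>2\sqrt2$, the strict gap permits an admissible choice of $\delta\le0$, giving (\ref{expst}) on $\{h\ge0,\ c>2\sqrt2\}$. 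I expect the main technical obstacle to be the existence and Gaussian control of the non-autonomous $\mathcal Z$ (Friedman's theory needs $1-v^{h}$ bounded and H\"older in $(t,z)$, which follows from parabolic smoothing for $v$ on each delay strip) together with the domination $\mathcal Z\le G$; the conceptual crux, however, is the potential-absorption step, since routing $w\,w^{h}$ into the coefficient rather than the source is exactly what keeps every term finite in the unbounded weight $L^{\infty}_{\lambda_{c}}$.
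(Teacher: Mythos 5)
Your proposal is correct and follows essentially the same route as the paper: the same decomposition that keeps the full factor $1-v(t-h,z-ch)$ in the zeroth-order coefficient with source term $-\phi_c(\cdot)\,w(t-h,\cdot-ch)$, the same weighted contraction estimate for the associated fundamental solution (the paper's Lemma \ref{lem1}, which you obtain by kernel domination and a Gaussian moment rather than the paper's change of variable $p=e^{-\lambda z}\eta$ plus Phragm\`en--Lindel\"of, a purely technical variation), the same variation-of-parameters inequality with step-by-step induction over the delay intervals closed by (\ref{inq1}), and the same endgame taking $\lambda_c=c/2$ with the bound $|\phi_c|_{0}\le e^{ch}$ from \cite{HT}. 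The only discrepancy is notational: you work with the literal $\epsilon_\lambda=\lambda^2-c\lambda+1<0$ on $(\lambda_1,\lambda_2)$ and decay $e^{\epsilon_{\lambda_c}t}$, whereas the paper writes $e^{-\epsilon_{\lambda_c}t}$ and states (\ref{inq1}) in a way that implicitly treats $\epsilon_{\lambda_c}$ as the positive quantity $c\lambda_c-\lambda_c^2-1$, so your account is the sign-consistent version of the same argument.
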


Denoting by 
$$
\kappa_h^c:=\frac{c}{2}-\frac{1}{ch}+\frac{1}{2}\sqrt{c^2+\frac{4}{c^2h^2}-4},
$$
the number $\kappa_h^c$ is a critical point of the function $Q:[\lambda_1(c),\lambda_2(c)]\to\R$ defined by
$$
Q(\lambda):=e^{\lambda ch}(-\lambda^2+c\lambda-1),
$$
we can establish the following result for the uniqueness of semi-wavefronts to (\ref{ie}) up to translations.
\begin{corollary}[Uniqueness of semi-wavefronts]\label{coro1}
Let $c>2$. If $\phi_c$ and $\psi_c$ are semi-wavefronts such that
\begin{eqnarray}\label{uniq}
\min\{|\psi_c|_{0},|\phi_c|_{0}\}<Q(\kappa_h^c)
\end{eqnarray}
then $\phi_c(\cdot)=\psi_c(\cdot+z_0)$ for some $z_0\in\R$. In particular, if $c>2\sqrt{2}$ the semi-wavefronts to (\ref{ie}) with speed $c$ are unique up to translations for all $h\geq0$.
\end{corollary}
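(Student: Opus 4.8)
The plan is to deduce uniqueness from the exponential stability of Theorem \ref{sthm}(ii), exploiting that a semi-wavefront is itself a stationary solution of (\ref{eqc}). After relabelling I may assume $|\phi_c|_0\le|\psi_c|_0$, so that the hypothesis (\ref{uniq}) reads $|\phi_c|_0<Q(\kappa_h^c)$. Since $Q$ vanishes at the endpoints $\lambda_1(c),\lambda_2(c)$ and is positive in between, its critical point $\kappa_h^c$ is an interior maximiser, so $Q(\kappa_h^c)=\max_{\lambda\in[\lambda_1(c),\lambda_2(c)]}Q(\lambda)$. Choosing $\lambda_c=\kappa_h^c\in(\lambda_1(c),\lambda_2(c))$ I thus get $|\phi_c|_0<Q(\lambda_c)$, equivalently $e^{-\lambda_c ch}|\phi_c|_0<-\epsilon_{\lambda_c}$, which is the smallness hypothesis of Theorem \ref{sthm}(ii) with \emph{strict} inequality. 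By continuity in $\delta$ of the right-hand side of (\ref{inq1}) at $\delta=0$, this strictness lets me select some $\delta<0$ still satisfying (\ref{inq1}); this negative $\delta$ is what will eventually force rigidity.

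Next I would produce the translation. Both profiles tend to $0$ as $z\to-\infty$, where the linearisation of the profile equation at $0$ has characteristic roots $\lambda_1(c)<\lambda_2(c)$ (the zeros of $\epsilon_\lambda$), and the generic semi-wavefront decays at the slow rate, $\phi_c(z)=Ae^{\lambda_1(c)z}(1+o(1))$, $\psi_c(z)=A'e^{\lambda_1(c)z}(1+o(1))$ with $A,A'>0$. Taking $z_0=\lambda_1(c)^{-1}\log(A/A')$ matches the leading coefficients of $\phi_c$ and $\psi_c(\cdot+z_0)$. The key point is that the whole pure-$\lambda_1(c)$ part of the expansion is rigid: inserting $\sum_{k\ge1}a_k e^{k\lambda_1(c)z}$ into the profile equation yields the recursion $\epsilon_{k\lambda_1(c)}\,a_k=\sum_{i+j=k}a_ia_j e^{-j\lambda_1(c)ch}$, so each $a_k=A^k\gamma_k$ with universal coefficients $\gamma_k$ depending only on $(c,h)$. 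Hence, once the leading coefficients agree, the full pure-$\lambda_1(c)$ expansions of $\phi_c$ and $\psi_c(\cdot+z_0)$ coincide and cancel in the difference, whose slowest surviving term is the $\lambda_2(c)$-mode. Therefore $\phi_c(z)-\psi_c(z+z_0)=O(e^{\lambda_2(c)z})$ as $z\to-\infty$; since $\lambda_c<\lambda_2(c)$ and both profiles are bounded, $e^{-\lambda_c z}(\phi_c(z)-\psi_c(z+z_0))$ is bounded on $\R$, i.e.\ $\phi_c-\psi_c(\cdot+z_0)\in L^\infty_{\lambda_c}$. As this difference is independent of the delay variable $s\in[-h,0]$, the inclusion (\ref{inp}) holds for the initial datum $v_0=\psi_c(\cdot+z_0)$.

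Finally I would run the stability estimate. Since any semi-wavefront is a stationary solution of (\ref{eqc}), so is $\psi_c(\cdot+z_0)$, and by the uniqueness in Proposition \ref{prop1} the solution with initial datum $v_0=\psi_c(\cdot+z_0)$ is $v(t,z)\equiv\psi_c(z+z_0)$ for all $t$. Theorem \ref{sthm}(ii) with target $\phi_c$, for the chosen $\lambda_c$ and $\delta<0$, then gives
\begin{eqnarray}\nonumber
|\psi_c(\cdot+z_0)-\phi_c|_{\lambda_c}=|v(t,\cdot)-\phi_c|_{\lambda_c}\le K_{\lambda_c}e^{\delta t}\quad\text{for all }t\ge0.
\end{eqnarray}
The left-hand side is independent of $t$, while the right-hand side tends to $0$ as $t\to\infty$; hence $|\psi_c(\cdot+z_0)-\phi_c|_{\lambda_c}=0$, that is $\phi_c(\cdot)=\psi_c(\cdot+z_0)$. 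For the last assertion I would instead take $\lambda_c=c/2$, so $-\epsilon_{c/2}=c^2/4-1$, and use the a priori bound $\phi_c(z)\le e^{ch}$ of \cite{HT}: then $e^{-\lambda_c ch}|\phi_c|_0\le e^{ch(2-c)/2}\le1$ for every $h\ge0$, which is strictly below $c^2/4-1$ as soon as $c>2\sqrt2$. This furnishes the strict smallness condition, hence a $\delta<0$ uniformly in $h\ge0$, and the argument above yields uniqueness up to translation on $\{(h,c):h\ge0,\ c>2\sqrt2\}$.

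The main obstacle is the asymptotic step of the second paragraph: proving that, after matching the leading $\lambda_1(c)$-coefficient, the difference of the two profiles genuinely decays at the faster rate $\lambda_2(c)$ and so lands in $L^\infty_{\lambda_c}$. A naive termwise estimate fails, because $2\lambda_1(c)$ can be much smaller than $\lambda_c$ for large $c$; what rescues the argument is the rigidity of the pure-$\lambda_1(c)$ expansion—all of its coefficients are universal functions of the single free constant $A$—so those slowly decaying terms cancel identically in the difference. One must also justify that both semi-wavefronts actually carry the slow mode $A>0$ rather than decaying at rate $\lambda_2(c)$, and control the possibly non-monotone behaviour at $+\infty$; the latter is harmless since the profiles are bounded and $e^{-\lambda_c z}\to0$ there.
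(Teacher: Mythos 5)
Your overall route coincides with the paper's: use (\ref{asw}) to match the leading $e^{\lambda_1(c)z}$ coefficients by a shift $z_0$, show $\Phi_c:=\phi_c-\psi_c(\cdot+z_0)=O(e^{\lambda_2(c)z})$ at $-\infty$ so that the weighted condition (\ref{inp}) holds for $\lambda_c\in(\lambda_1(c),\lambda_2(c))$, then run Theorem \ref{sthm}(ii) with $\lambda_c=\kappa_h^c$ (resp.\ $\lambda_c=c/2$) and a strictly negative $\delta$, which exists precisely because (\ref{uniq}) is strict (your silent sign-correction, reading the hypothesis as $e^{-\lambda_c ch}|\phi_c|_0<-\epsilon_{\lambda_c}=e^{-\lambda_c ch}Q(\lambda_c)$, is the intended one, since $Q(\lambda)=e^{\lambda ch}(-\epsilon_\lambda)$). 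Your endgame is in fact a legitimate simplification of the paper's: you apply the stability estimate once, to the stationary datum $v_0=\psi_c(\cdot+z_0)$ (constant in time by Proposition \ref{prop1}), targeting the wave of smaller sup-norm, so only the minimum in (\ref{uniq}) is ever needed; the paper instead takes the auxiliary datum $v_0(s,z)=\min\{1,A_{\phi_c}e^{\lambda_1 z}\}$ and applies Theorem \ref{sthm} to both differences $w_0^1=v_0-\phi_c$ and $w_0^2=v_0-\psi_c$, which on its face requires the smallness hypothesis for both profiles. Your variant is better matched to the stated hypothesis.

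The genuine weak point is the step you yourself flag: your claim $\Phi_c=O(e^{\lambda_2 z})$ rests on each profile admitting an expansion $\sum_{k\ge1}\gamma_kA^ke^{k\lambda_1 z}$ with universal $\gamma_k$, valid to all orders below $\lambda_2$, and this is asserted rather than proved; moreover your recursion breaks down at the resonances $k\lambda_1(c)=\lambda_2(c)$ (i.e.\ $\lambda_1^2=1/k$, where $\epsilon_{k\lambda_1}=0$ and factors $ze^{\lambda_2 z}$ appear — harmless for (\ref{inp}) since $\lambda_c<\lambda_2$, but fatal to the recursion as written). The paper avoids this bookkeeping entirely: $\Phi_c$ satisfies the \emph{linear} functional differential equation $\Phi_c''-c\Phi_c'+(1-\psi_c)\Phi_c-\phi_c\Phi_c(\cdot-ch)=0$, whose coefficients converge exponentially to those of $\Phi''-c\Phi'+\Phi=0$ as $z\to-\infty$; by Mallet-Paret's Fredholm theory \cite[Proposition 7.2]{MP}, any nonzero solution is asymptotic to an eigensolution of the limit equation, and a putative leading behavior $Be^{\mu z}$ with $\epsilon_\mu\neq0$ (e.g.\ $\mu=2\lambda_1$) is impossible because the perturbation terms it generates are of strictly higher order. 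Hence the matching $\Phi_c=o(z^{j_c}e^{\lambda_1 z})$ alone forces $\Phi_c=O(e^{\lambda_2 z})$, with no cancellation of higher universal coefficients required. Replacing your series-rigidity sketch by this linear argument (or proving your expansion by bootstrapping that same linear theory) closes the gap; note also that the fact that every semi-wavefront with $c>2$ carries the slow mode ($A>0$, $j_c=0$ in (\ref{asw})) is used without proof by you and by the paper alike, which attributes it to a standard ODE argument.
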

 \bigskip



\bigskip

Now, for a function $v:[-h,+\infty)\times\R\to\R$ and an initial datum $\eta_0$ we define the following Cauchy problem 
\begin{eqnarray}\label{ACP}
\dot{\eta}(t,z)=\eta''(t,z)-c\eta'(t,z)+[1-v(t-h,z-ch)]\eta(t,z)\quad (t,z)\in\R_+\times\R,\\
\eta(0,z)=\eta_0(z) \ \quad\quad\quad\quad\quad\quad\quad\quad\quad\quad\quad\quad\quad\quad\quad\quad z\in\R.\label{ACP1}
\end{eqnarray}


\begin{lemma}\label{lem1}
Let $\lambda\in\R$ and $v_t\in\mathcal{C}$ for all $t\geq 0$ be. If   $\eta_0\in L^{\infty}_{\lambda}$  then (\ref{ACP})-(\ref{ACP1}) has a unique solution $\eta(t,\cdot)$ in $L^{\infty}_{\lambda}$ for all $t\geq 0$. Moreover, denoting by $\Gamma_v(t)\eta_0:=\eta(t,\cdot)$  we have the following estimate  
\begin{eqnarray}\label{estimate1}
|\Gamma_v(t)\eta_0|_{\lambda}\leq e^{-\epsilon_{\lambda}t}|\eta_0|_{\lambda}\quad\hbox{for all}\quad t\geq 0.
\end{eqnarray} 
\end{lemma}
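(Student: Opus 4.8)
The plan is to regard (\ref{ACP})--(\ref{ACP1}) as a linear parabolic Cauchy problem $\dot\eta=\eta''-c\eta'+a(t,z)\eta$ with coefficient $a(t,z):=1-v(t-h,z-ch)$, and to split the argument into well-posedness in the weighted class $L^\infty_\lambda$ and the a priori bound (\ref{estimate1}).

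For well-posedness, observe that since $v_t\in\mathcal{C}$ for every $t\ge0$ the coefficient $a$ is bounded and Lipschitz in $z$, uniformly in $t$ on each slab $[kh,(k+1)h]$. Hence the operator $\partial_t-\partial_{zz}+c\partial_z-a$ possesses a fundamental solution $Z(t,s;z,\xi)$ in the sense of Friedman \cite[Chapter 1, Theorem 10]{AF}, and $\Gamma_v(t)\eta_0(z):=\int_\R Z(t,0;z,\xi)\eta_0(\xi)\,d\xi$ solves the problem. Because $Z$ enjoys Gaussian bounds in $|z-\xi|$, this integral converges for data of exponential growth, so $\eta_0\in L^\infty_\lambda$ (which may grow like $e^{\lambda z}$) is admissible, and the same Gaussian bounds yield uniqueness within the class of functions of at most exponential growth. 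Propagating slab by slab, exactly as in Proposition \ref{prop1}, extends the solution to all $t\ge0$.

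For the estimate I would avoid the weight by comparison with explicit exponential barriers. The functions $b^{\pm}(t,z):=\pm|\eta_0|_\lambda\,e^{\lambda z+\epsilon_\lambda t}$ solve the $v\equiv0$ equation exactly, with $\epsilon_\lambda=\lambda^2-c\lambda+1$ emerging as the temporal rate; for the genuine equation a one-line computation gives $\dot b^{+}-(b^{+})''+c(b^{+})'-(1-v)b^{+}=v\,b^{+}\ge0$, so $b^{+}$ is a supersolution (and $b^{-}$ a subsolution) precisely because $v\ge0$ — the positivity supplied by Proposition \ref{prop1} for the solutions to which the lemma is applied. Since $b^{-}(0,\cdot)\le\eta_0\le b^{+}(0,\cdot)$ by the very definition of $|\cdot|_\lambda$, the comparison principle forces $|\eta(t,z)|\le|\eta_0|_\lambda\,e^{\lambda z+\epsilon_\lambda t}$; multiplying by $e^{-\lambda z}$ and taking the supremum gives $|\eta(t,\cdot)|_\lambda\le e^{\epsilon_\lambda t}|\eta_0|_\lambda$, which is (\ref{estimate1}). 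The conjugation $w:=e^{-\lambda\cdot}\eta$, turning (\ref{ACP}) into $\dot w=w''+(2\lambda-c)w'+(\epsilon_\lambda-v)w$, explains conceptually why the unbounded weight collapses to the single constant $\epsilon_\lambda$; note that for $\lambda\in(\lambda_1(c),\lambda_2(c))$ one has $\epsilon_\lambda<0$, so the bound is a genuine contraction, which is the form needed for the stability application.

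The step I expect to be the real obstacle is making the comparison principle rigorous on the unbounded line within the exponentially weighted class: both $\eta$ and the barriers $b^{\pm}$ are unbounded, so the elementary maximum principle does not apply directly and a spurious solution growing faster than $e^{\lambda z}$ as $z\to+\infty$ must be ruled out. The remedy is a Phragm\'en--Lindel\"of argument in the spirit of \cite[Chapter 3, Theorem 10]{PW} (already invoked for Proposition \ref{prop1}): after the gauge $w=e^{-\lambda\cdot}\eta$ one works with the quantity $w$, whose barriers are the spatially constant functions $\pm|\eta_0|_\lambda e^{\epsilon_\lambda t}$, and controls its behavior at $z\to\pm\infty$ to license the comparison. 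Carrying this growth control uniformly in $t$ over each slab and propagating it across slabs, while never losing the positivity $v\ge0$, is the technical heart of the proof.
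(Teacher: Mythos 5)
Your proposal is correct and, at its core, is the paper's own argument: the paper likewise gauges by $e^{-\lambda z}$ to pass to the bounded problem (\ref{ACP2})--(\ref{ACP3}), where the zeroth-order coefficient is $\epsilon_\lambda-v\leq\epsilon_\lambda$ precisely because $v\geq0$, and then invokes comparison/Phragm\`en--Lindel\"of. The packaging differs slightly: for well-posedness the paper additionally translates the spatial variable by $(2\lambda-c)t$ to remove the drift so that \cite[Lemma 4.1]{UC} applies directly to the gauged equation (existence, uniqueness of bounded solutions, and the decay estimate all come from that one citation), whereas you build the solution from Friedman's fundamental solution with Gaussian bounds and run the comparison yourself with explicit barriers $\pm|\eta_0|_\lambda e^{\lambda z+\epsilon_\lambda t}$. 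Your route is more self-contained and makes visible exactly where each hypothesis is used; the paper's is shorter. Two of your observations deserve emphasis. First, your conclusion $|\eta(t,\cdot)|_\lambda\leq e^{\epsilon_\lambda t}|\eta_0|_\lambda$ with $\epsilon_\lambda=\lambda^2-c\lambda+1<0$ on $(\lambda_1(c),\lambda_2(c))$ differs in sign from the printed (\ref{estimate1}), $e^{-\epsilon_\lambda t}$; but this is an inconsistency internal to the paper, not an error of yours. With the displayed definition of $\epsilon_\lambda$, the coefficient in (\ref{ACP2}) is $\epsilon_\lambda-v$ and comparison can only yield $e^{\epsilon_\lambda t}$; the rest of the paper (condition (\ref{inq1}), which needs $\delta+\epsilon_{\lambda_c}>0$ with $\delta\leq0$, and the function $Q(\lambda)=e^{\lambda ch}(-\lambda^2+c\lambda-1)$) only makes sense if $\epsilon_\lambda$ there denotes $-\lambda^2+c\lambda-1>0$ between the roots. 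Your version is the one consistent with the intended content and with the application in Theorem \ref{sthm}. Second, you are right that $v\geq0$ is indispensable for the estimate and is not among the stated hypotheses of Lemma \ref{lem1}: for $v$ negative somewhere the zeroth-order coefficient exceeds $\epsilon_\lambda$ and the decay rate fails; the paper relies tacitly on the positivity from Proposition \ref{prop1} in every application, exactly as you note.
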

\begin{proof}
Making the change of variable $p(t,z):=e^{-\lambda z}\eta(t,z)$ we have that $p$ satisfies  the Cauchy problem
\begin{eqnarray}\label{ACP2}
\dot{p}(t,z)=p''(t,z)+(2\lambda-c)p'(t,z)+(\epsilon_{\lambda}-v(t-h,z-ch))p(t,z)\quad(t,z)\in\R_+\times\R\\
 p(0,z)=e^{-\lambda z}\eta_0(z).\quad\quad\quad\quad z\in\R \label{ACP3}
\end{eqnarray}

\noindent If we translate the spatial variable by $(2\lambda-c)t \ $  then, using the Lipschitz continuity of the function $\tilde{v}(t,\cdot):=v(t,\cdot-(2\lambda-c)t)$ (uniformly in $t\in[0,T]$, for any $T<\infty$),  by \cite[Lemma 4.1]{UC} we conclude that (\ref{ACP2})-(\ref{ACP3}) has a unique bounded solution (uniformly in $t\in[0,T]$, for any $T<\infty$), so $\eta(t,\cdot)\in L^{\infty}_{\lambda}$ for all $t\geq 0$. Then, by the Phragm\`en-Lindel\"of principle $\eta(t,\cdot)\geq 0$ for all $t\geq 0$ and the estimation (\ref{estimate1}) is obtained from \cite[Lemma 4.1]{UC}.

\end{proof}

\begin{proof}[Theorem \ref{sthm}]
(i)   It follows from Proposition \ref{prop1} and Lemma \ref{lem1} with $\eta=v$ and $\lambda=\lambda_c$.



\noindent (ii) We denote by $U=[0,h]\times\R$, $R=|\phi_c|_{0}e^{-\lambda_c ch}$ and $w_k(t,x):= w(t+(k-1)h,x)$  where $k=0,1,2,...$
If we take $w=v-\phi_c$ then
\begin{eqnarray}\label{eqt}
\dot{w}_k(t,z)=w''_k(t,z)-cw'_k(t,z)+[1-v_{k-1}(t,z-ch)]w_k(t,z)+f_{k-1}(t,z), \ \hbox{on} \ U,
\end{eqnarray}

\noindent where $f_{k-1}(t,\cdot)=-\phi_c(\cdot) \ w_{k-1}(t,\cdot-ch).$


Now, we define the sequence $C_k:=K_{\lambda_c}e^{(k-1)\delta h}$ for $k=0,1,2,...$ 

\noindent Clearly, $|w_0(t)|_{\lambda_c}\leq C_0e^{\delta t}$ for $t\in[0,h]$. Next, we assume that  

\begin{eqnarray}\label{ih}
|w_{l}(t)|_{\lambda_c}\leq C_{l}e^{\delta t}\quad \hbox{for all}\quad t\in[0,h]
\end{eqnarray}

 \noindent and some  $l\geq1.$ 
 
 \noindent Next, by Lemma \ref{lem1} there exist $D_l>0$ such that
  \begin{eqnarray}\label{idk}
 |f_l(t,z)|\leq D_le^{cz/2}, \ \hbox{on} \ U.
 \end{eqnarray}
Therefore, using Proposition \ref{prop1},   \cite[Chapter 1,Theorem 12]{AF} implies that the associated Cauchy problem  to (\ref{eqt}) with initial datum $w_{l+1}(0,\cdot)$ has a unique bounded solution which is represented by
\begin{eqnarray}\nonumber
w_{l+1}(t)=\Gamma_{v_l}(t) w_{l+1}(0)+\int_0^t \Gamma_{v_l}(t-\tau) f_{l}(\tau)d\tau.
\end{eqnarray}

\noindent Next, using $w_{l+1}(0)=w_l(h)$ and Lemma \ref{lem1} we have

\begin{eqnarray}\nonumber
|w_{l+1}(t)|_{\lambda_c}&\leq& e^{-\epsilon_{\lambda_c}t}|w_{l}(h)|_{\lambda_c}+ R\int_0^t e^{-\epsilon_{\lambda_c}(t-\tau)}|w_{l}(\tau)|_{\lambda_c}d\tau.
\end{eqnarray}
Because of (\ref{ih}) we have
\begin{eqnarray}\nonumber
|w_{l+1}(t)|_{\lambda_c}&\leq& e^{-\epsilon_{\lambda_c}t}e^{\delta h}C_{l}+Re^{-\epsilon_{\lambda_c}t}C_l[e^{(\epsilon_{\lambda_c}+\delta)t}- 1]/(\epsilon_{\lambda_c}+\delta)\\ \nonumber
&=& e^{\delta t}C_{l}[e^{-(\epsilon_{\lambda_c}+\delta)t+\delta h}+R(1-e^{-(\epsilon_{\lambda_c}+\delta)t})/(\epsilon_{\lambda_c}+\delta)].
\end{eqnarray}
Now, due to (\ref{inq1}), the function $M:[0,h]\to\R$ defined by 
$$
M(t)=e^{-(\epsilon_{\lambda_c}+\delta)t+\delta h}+R(1-e^{-(\epsilon_{\lambda_c}+\delta)t})/(\epsilon_{\lambda_c}+\delta)
$$

\noindent is  non increasing. Therefore, 
$$
|w_{l+1}(t)|_{\lambda_c}\leq e^{\delta (t+h)}C_l=e^{\delta t}C_{l+1}\quad\hbox{for all}\quad t\in[0,h].
$$

\noindent So, the general conclusion is
\begin{eqnarray}\nonumber
|w(t,\cdot)|_{\lambda_c}\leq K_{\lambda_c}e^{\delta t} \quad \hbox{for all}\quad t\geq -h.
\end{eqnarray}
 Finally, due to the estimate $|\phi_c|_{0}<e^{ch}$ \cite[Theorem 4.3]{HT}, taking $\lambda_c=c/2$ we have that (\ref{norm}) is satisfied for all $c>2\sqrt{2}$. 
 \end{proof}

\begin{proof}[Corollary \ref{coro1}]
By a standard argument of ordinary differential equations the asymptotic behavior of non negative solutions of (\ref{swe}) at $-\infty$ is
\begin{eqnarray}\label{asw}
\phi_c(z)=A_{\phi_c}z^{j_c}e^{\lambda_1z}+o(z^je^{\lambda_1z})\\ \nonumber
\psi_c(z)=A_{\psi_c}z^{j_c}e^{\lambda_1z}+o(z^je^{\lambda_1z})
\end{eqnarray}
 where $A_{\phi_c}, A_{\psi_c}>0$, $j_c=0,1$  and $j_c=1$ if and only if $c=2$. So, for some $z_0\in\R$ we have $A_{\psi_c(\cdot+z_0)}=A_{\phi_c(\cdot)}$.
 
 Otherwise, denoting $\Phi_c(z)=\phi_c(z)-\psi_c(z+z_0)$ we have
\begin{eqnarray*}\label{equn}
\Phi''_c(z)-c\Phi'_c(z)+(1-\psi_c(z))\Phi_c(z)-\phi_c(z)\Phi_c(z-ch)=0\quad z\in\R.
\end{eqnarray*}
 
 \noindent Next, using (\ref{asw}) and  applying \cite[Proposition 7.2]{MP} we can conclude 
\begin{eqnarray}\nonumber
\Phi_c(z)&=&O(e^{\lambda_2z})\quad \hbox{if}\quad c>2.
\end{eqnarray}

Next, we take, for example, the initial datum $v_0(s,z)=\min\{1,A_{\phi_c}e^{\lambda_1z}\}\in\mathcal{C}$. Then, applying Theorem  \ref{sthm} with $\lambda_c=\kappa_h^c$, $w_0^1=v_0-\phi_c$ and $w_0^2=v_0-\psi_c$ we conclude that $\phi_c(\cdot)=\psi_c(\cdot+z_0)$ under the  condition (\ref{uniq}). Similarly, taking $\lambda_c=c/2$ we conclude   that $\phi_c(\cdot)=\psi_c(\cdot+z_0)$ on the domain $\{(h,c)\in\R^2: h\geq 0\ \hbox{and}\ c>2\sqrt{2}\}$.
\end{proof}

 \section*{Acknowledgments} This work was supported by FONDECYT (Chile) through the Postdoctoral  Fondecyt  2016 program with project number 3160473, and FONDECYT project 116--0856.
 The authors are  grateful to Sergei Trofimchuk for his important comments on this work and to Fran\c cois Hamel for some technical advice.

\vspace{5mm}
 
\begin{thebibliography}{99} 





\bibitem{ChKm} Soon-Yeong  Chung and Doham Kim, {\it An example of nonuniqueness of the Cauchy problem for the heat equation, Comm. in Partial Differential Equations}  {\bf 19} (1994), no. 7-8, 1257-1261.  MR1284810



\bibitem{NP} Arnaud Ducrot and Gr\'egoire Nadin, {\it Asymptotic behaviour of travelling waves for the delayed Fisher-
KPP equation}, J. Differential Equations {\bf 256} (2014), no. 9, 3115-3140. MR3171769 
\bibitem{ES} Ute Ebert  and  Win van~Saarloos,  {\it Front propagation into unstable states: universal
algebraic convergence towards uniformly translating pulled
fronts},  Phys. D  \textbf{146} (2000), no. 1-4,  1-99. MR1787406




\bibitem{AF} Avner Friedman,  {\it Partial Differential Equations of Parabolic Type}, Prentice-Hall, Inc., Englewood Cliffs, N.J. 1964. MR0181836
\bibitem{Fri} Gero Friesecke, {\it Exponentially growing solutions for a delay-diffusion equation with negative feedback}, J. Differential Equations \textbf{98} (1992), no. 1, 1-18. MR1168968


\bibitem{AGT} Adri\' an Gomez and Sergei Trofimchuk, {\it Monotone traveling wavefronts of the KPP-Fisher delayed equation}, J. Differential Equations, {\bf 250} (2011), no. 4, 1767-1787. MR2763555




\bibitem{HT} Karel Hasik and Sergei Trofimchuk, {\it Slowly oscillating wavefronts of the KPP-Fisher delayed equation},    Discrete Contin. Dyn. Syst. \textbf{34} (2014), no. 9,  3511-3533. MR3190991








\bibitem{MOZ} Ming Mei, Chunhua Ou, and Xiao-Qiang Zhao,
{\it Global stability of monostable traveling waves for nonlocal
time-delayed reaction-diffusion equations}, SIAM J. Math.
Anal. \textbf{42} (2010), no. 6,  233-258. MR2745791 

\bibitem{MP} John Mallet-Paret, {\it The Fredholm alternative for functional differential equations of mixed type},  J. Dyn. Diff. Eqns.  \textbf{11} (1999), no. 1,  1-48. MR1680463

 \bibitem{PW}  Murray  Protter and Hans  Weinberger, {\it Maximum Principles in Differential Equations},  Prentice-Hall, Inc., Englewood Cliffs, N.J. 1967. MR0219861



\bibitem{STG} David  Sattinger, {\it On the stability of waves of nonlinear parabolic systems},  Adv. Math. {\bf  22} (1976), no. 3,
312--355. MR0435602



\bibitem{STR1} Abraham Solar and Sergei Trofimchuk, {\it Speed Selection and Stability of Wavefronts for Delayed Monostable Reaction-Diffusion Equations},  J. Dynam. Differential Equations \textbf{28} (2016), no. 3-4, 1265-1292. MR3537371


\bibitem{TW} Curtis Travis and Glenn Webb, {\it Existence and stability for partial functional differential equations},
Trans. Amer. Math. Soc.. {\bf 200} (1974), 395-418. MR0382808
\bibitem{TTT}  Elena Trofimchuk, Victor Tkachenko, and Sergei Trofimchuk, {\it Slowly
oscillating wave solutions of a single species reaction-diffusion
equation with delay},  J. Differential Equations \textbf{245} (2008), no. 8, 2307-2332. MR2446193



\bibitem{UC}   K\={o}hei Uchiyama, {\it The behavior of solutions of some nonlinear diffusion equations for large time},  J. Math. Kyoto Univ. \textbf{18} (1978), no. 3, 453-508. MR0509494 
 

\bibitem{Wu} Jianhong Wu, {\it Theory and Applications of Partial Functional Differential Equations},vol. 19, Springer-Verlag, New
York, 1996.  MR1415838

\bibitem{WZ} Jianhong Wu and Xingfu Zou, {\it Traveling wave fronts of reaction-diffusion systems with delay}, J. Dynam. Differential Equations {\bf 13} (2001), no. 3, 651-687. MR1845097


\end {thebibliography}

\end{document}